\newtheorem{thm}{Theorem}[section]
\newtheorem{prop}[thm]{Proposition}
\newcommand{\C}{{\mathbb C}}
\newcommand{\R}{{\mathbb R}}
\newcommand{\T}{{\mathbb T}}
\newcommand{\Z}{{\mathbb Z}}
\newcommand{\N}{{\mathbb N}}
\newcommand{\La}{\Lambda}
\newcommand{\f}{\frac}
\newcommand{\ov}{\overline}
\newcommand{\al}{\alpha}
\newcommand{\be}{\beta}
\newcommand{\ga}{\gamma}
\newcommand{\ze}{\zeta}
\renewcommand{\th}{\theta}
\newcommand{\ph}{\varphi}
\title[Questions about Extreme Points]
{Questions about Extreme Points}
\author[K. M. Dyakonov]{Konstantin M. Dyakonov}
\address
{Departament de Matem\`atiques i Inform\`atica\\ 
Universitat de Barcelona, IMUB and BGSMath\\
Gran Via de les Corts Catalanes, 585\\ 
E-08007 Barcelona\\ 
Spain}
\address{\,\,\\
and}
\address{\quad\\
Instituci\'o Catalana de Recerca i Estudis Avan\c{c}ats (ICREA)\\ 
Pg. Llu\'is Companys, 23\\ 
E-08010 Barcelona\\ 
Spain}
\email{konstantin.dyakonov@icrea.cat}
\keywords{Extreme points, spectral gaps, Hardy spaces, Toeplitz kernels}
\subjclass{30H05, 30H10, 42A32, 46A55, 47B35.}
\thanks{Supported in part by grants MTM2017-83499-P and PID2021-123405NB-I00 from El Ministerio de Ciencia e Innovaci\'on (Spain).}
\begin{document}
\begin{abstract}
We discuss the geometry of the unit ball---specifically, the structure of its extreme
points (if any)---in subspaces of $L^1$ and $L^\infty$ on the circle that are formed by functions with prescribed spectral gaps. A similar issue is considered for kernels of Toeplitz operators in $H^\infty$.
\end{abstract}

\maketitle

\section{Introduction}

Given a Banach space $X=(X,\|\cdot\|)$, we write  
$$\text{\rm ball}(X):=\{x\in X:\,\|x\|\le1\}.$$
An element $x$ of $\text{\rm ball}(X)$ is said to be an {\it extreme point} thereof if it is not expressible as $x=\f12(u+v)$ with two distinct points $u,v\in\text{\rm ball}(X)$. Clearly, every extreme point $x$ of $\text{\rm ball}(X)$ satisfies $\|x\|=1$. 

\par In what follows, the role of $X$ is played by certain function spaces on the circle $\T:=\{z\in\C:|z|=1\}$ which are defined in spectral terms. First of all, letting $m$ stand for the normalized arc length measure on $\T$, we introduce the (Lebesgue) spaces $L^p=L^p(\T,m)$ in the usual way, and we denote the standard $L^p$ norm by $\|\cdot\|_p$. Further, we recall that the {\it Fourier coefficients} of a function $f\in L^1$ are given by
$$\widehat f(k):=\int_\T\ov\ze^kf(\ze)\,dm(\ze),\qquad k\in\Z,$$
and the set 
$$\text{\rm spec}\,f:=\{k\in\Z:\,\widehat f(k)\ne0\}$$
is called the {\it spectrum} of $f$. 

\par For $1\le p\le\infty$, the {\it Hardy space} $H^p$ is then defined by
$$H^p:=\{f\in L^p:\,\text{\rm spec}\,f\subset\Z_+\},$$
where $\Z_+:=\{0,1,2,\dots\}$. (We also introduce the notation $\Z_-:=\Z\setminus\Z_+$ for future reference.) As usual, we may view elements of $H^p$ as holomorphic functions on the open unit disk when convenient; see \cite[Chapter II]{G} for the underlying theory and basic properties of $H^p$ spaces. 

\par More generally, given a nonempty set $\La\subset\Z_+$, we consider the {\it lacunary} (or {\it punctured}) {\it Hardy spaces}
$$H^p(\La):=\{f\in L^p:\,\text{\rm spec}\,f\subset\La\},\qquad1\le p\le\infty,$$
normed by $\|\cdot\|_p$ as before. We are concerned with the extreme points of $\text{\rm ball}(H^p(\La))$, so only the endpoint exponents $p=1$ and $p=\infty$ are of interest. Indeed, for $1<p<\infty$, the uniform convexity of $L^p$ implies that every unit-norm function is extreme.

\par In the classical setting, it is well known that the extreme points of $\text{\rm ball}(H^1)$ are precisely the outer functions $\mathcal F\in H^1$ with $\|\mathcal F\|_1=1$, whereas the extreme points of $\text{\rm ball}(H^\infty)$ are the functions $f\in H^\infty$ satisfying $\|f\|_\infty=1$ and 
\begin{equation}\label{eqn:logintdiv}
\int_\T\log(1-|f|)\,dm=-\infty.
\end{equation}
Both results can be found in \cite{dLR}; alternatively, see \cite[Chapter IV]{G} or \cite[Chapter 9]{H}. 

\par Recently, the author was able to establish the corresponding extreme point criteria in $H^p(\La)$, with $p=1,\infty$, under the hypothesis that the underlying set $\La$ is either small or large. Precisely speaking, it was assumed that either 
\begin{equation}\label{eqn:lasmall}
\#\La<\infty
\end{equation}
or 
\begin{equation}\label{eqn:lalarge}
\#(\Z_+\setminus\La)<\infty.
\end{equation}
In the case of $H^1(\La)$, the extreme points of the unit ball were described in \cite{DLac} under condition \eqref{eqn:lasmall}, and in \cite{DCR, DNear} under condition \eqref{eqn:lalarge}; the case of $H^\infty(\La)$ was treated in \cite{DAA} for both types of $\La$'s. 

\par Little seems to be known about the extreme points in $H^1(\La)$ and $H^\infty(\La)$ when neither \eqref{eqn:lasmall} nor \eqref{eqn:lalarge} holds. The questions we ask below are largely motivated by our curiosity in this regard. Sometimes, however, we find it natural to adopt a more general viewpoint. Namely, letting $\La$ be a subset of $\Z$ (not necessarily of $\Z_+$), we extend our attention to the {\it lacunary $L^p$ spaces}
$$L^p_\La:=\{f\in L^p:\,\text{\rm spec}\,f\subset\La\},\qquad p=1,\infty,$$
with norm $\|\cdot\|_p$. 

\section{Questions, problems, and a bit of discussion}

Here are some of the questions that puzzle us. 

\medskip\noindent\textbf{Question 1.} Given a set $\La\subset\Z$, which unit-norm functions from $L^1_\La$ (if any) are extreme points for $\text{\rm ball}(L^1_\La)$? Also, what are the extreme points of $\text{\rm ball}(L^\infty_\La)$?

\medskip Clearly, of concern are the cases that do not reduce to the existing results on $H^p(\La)$ as described above. Furthermore, it may well happen for a suitable $\La$ that $\text{\rm ball}(L^1_\La)$ has no extreme points at all. (A classical example is provided by taking $\La=\Z$, in which case $L^1_\La$ becomes the \lq\lq full" $L^1$.) In fact, the mere existence of extreme points seems to present a nontrivial problem, which we now state and discuss in some detail.

\medskip\noindent\textbf{Question 2.} For which sets $\La\subset\Z$ does $\text{\rm ball}(L^1_\La)$ possess an extreme point? In particular, for which sets $\La$ of the form 
\begin{equation}\label{eqn:lame}
\La=E\cup\Z_+,\quad\text{\rm with }E\subset\Z_-,
\end{equation}
does this happen?

\medskip Our interest in this last class of sets reflects an attempt to interpolate, so to speak, between $H^1$ and $L^1$ (i.e., between the cases $E=\emptyset$ and $E=\Z_-$), where two different things occur. Namely, the unit ball has plenty of extreme points in the former case, and none at all in the latter.

\par Now, let us say that a set $\La\subset\Z$ is {\it periodic} if there is a positive integer $n$ such that 
\begin{equation}\label{eqn:perset}
\La+n=\La
\end{equation}
(as usual, $\La+n$ stands for $\{k+n:k\in\La\}$). For instance, any arithmetic progression in $\Z$ is obviously periodic. 

\par To introduce another type of sets that we need here, we first recall the notation $M(\T)$ for the space of all finite Borel complex measures on $\T$. Also, for $\mu\in M(\T)$, we let $\text{\rm spec}\,\mu$ denote the set of those indices $k\in\Z$ for which $\widehat\mu(k):=\int_\T\ov z^kd\mu$ is nonzero. Finally, a subset $\La$ of $\Z$ is said to be a {\it Riesz set}, written as $\La\in\mathcal R$, if every measure $\mu\in M(\T)$ with $\text{\rm spec}\,\mu\subset\La$ is absolutely continuous with respect to $m$. 

\par The classical F. and M. Riesz theorem (see, e.g., \cite[Chapter II]{G}) tells us that $\Z_+\in\mathcal R$. A deeper study and further examples of Riesz sets can be found in \cite[Part One, Chapter 1]{HJ}. Among these examples are the $\La$'s given by \eqref{eqn:lame}, where $E$ is one of the following sets: 
$$\{-2^k:k\in\N\},\qquad\{-k^2:k\in\N\},\qquad\{-p:\,p\,\,\text{\rm prime}\}.$$
The next result provides a bit of information on Question 2 (but is a far cry from answering it completely).

\begin{thm} Let $\La\subset\Z$. If either $\La$ is periodic or $\#(\Z\setminus\La)<\infty$, then $\text{\rm ball}(L^1_\La)$ has no extreme points. On the other hand, if $\La\in\mathcal R$ then $\text{\rm ball}(L^1_\La)$ does possess extreme points.
\end{thm}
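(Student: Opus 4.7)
The two nonexistence claims will be handled together. Given a unit-norm $f\in L^1_\La$, my strategy is to find a nonzero real-valued $h\in L^\infty(\T)$ with $\|h\|_\infty\le1$, $fh\in L^1_\La$, and $\int_\T|f|h\,dm=0$. Since $|1\pm h|=1\pm h\ge0$, the annihilation condition forces $u:=f(1+h)$ and $v:=f(1-h)$ to both have $L^1$-norm equal to $1$; they lie in $L^1_\La$ and are distinct (as $fh\not\equiv0$), so $f=\f12(u+v)$ witnesses the nonextremality. Unit-ball elements with $\|f\|_1<1$ are nonextreme for a trivial reason (perturbation by a small multiple of any fixed nonzero element of $L^1_\La$).

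When $\La+n=\La$, the natural choice is $h(\ze)=\cos(n\th+\al)$ with $\ze=e^{i\th}$. Its spectrum is $\{\pm n\}$, and $\La\pm n=\La$ gives $fh\in L^1_\La$; the free phase parameter $\al$ provides one real degree of freedom with which to annihilate the single real quantity $\int|f|h\,dm$, which turns out to be a real cosine in $\al$ whose amplitude equals $|\widehat{|f|}(n)|$; and $fh\not\equiv0$ since $h$ has only finitely many zeros. When $\Z\setminus\La$ is finite, the same $h$ is obtained by a dimension count: letting $E:=\{|f|>0\}$ (a set of positive measure), the conditions $fh\in L^1_\La$ and $\int_E|f|h\,dm=0$ amount to finitely many real-linear constraints on real-valued $h$ supported in $E$, and the common kernel in $L^\infty_\R(E)$ is infinite-dimensional. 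Any nonzero element, rescaled to $\|h\|_\infty\le1$ and extended by $0$ off $E$, does the job.

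For the Riesz case, I would invoke Krein--Milman. The hypothesis $\La\in\mathcal R$ identifies $L^1_\La$ isometrically with $M_\La:=\{\mu\in M(\T):\text{\rm spec}\,\mu\subset\La\}$ via $f\mapsto f\,dm$. As the intersection, over $k\notin\La$, of the weak-$*$ closed hyperplanes $\{\widehat\mu(k)=0\}$, the subspace $M_\La$ is weak-$*$ closed in $M(\T)$; hence its unit ball is a nonempty weak-$*$ compact convex subset of the Hausdorff locally convex space $(M(\T),w^*)$. Since extremality is a purely convex-geometric notion, Krein--Milman furnishes an extreme point of this set, which is automatically extreme in $\text{\rm ball}(L^1_\La)$. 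The main subtlety, in my view, is recognizing that although $L^1_\La$ carries no natural topology in which its own unit ball is compact, the Riesz property legitimizes passage to the enveloping $M(\T)$, where weak-$*$ compactness is at hand.
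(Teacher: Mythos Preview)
Your argument is correct and parallels the paper's: a real multiplier $h$ for the periodic and cofinite cases (the latter via a dimension count), and Krein--Milman through the identification $L^1_\La\cong M_\La\subset M(\T)$ in the Riesz case. The only notable difference is that the paper invokes the standard criterion (citing \cite{Gam,DNear}) that any real $h\in L^\infty$ with $fh\in L^1_\La$ and $h$ nonconstant on $\{f\ne0\}$ already forces nonextremality---this absorbs your annihilation condition $\int_\T|f|h\,dm=0$ (one subtracts that constant from $h$), allowing the paper to take simply $h(z)=\Re(z^n)$ without a phase shift in the periodic case and to work inside the finite-dimensional family $h_\al(z)=\Re\bigl(\sum_{j=1}^{2N+1}\al_jz^j\bigr)$, $\al\in\R^{2N+1}$, rather than all of $L^\infty_\R(E)$ in the cofinite case.
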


\begin{proof} Suppose that $\La$ is periodic, so that \eqref{eqn:perset} holds for some $n\in\N$. Now let $f\in L^1_\La$ be an arbitrary function with $\|f\|_1=1$. To show that $f$ is not an extreme point of $\text{\rm ball}(L^1_\La)$, it suffices to find a real-valued function $h\in L^\infty$ such that $fh\in L^1_\La$ and $h$ is nonconstant on the set $\{\ze\in\T:f(\ze)\ne0\}$. (The existence of such an $h$ is actually equivalent to the statement that $f$ is nonextreme for $\text{\rm ball}(L^1_\La)$. We refer to \cite[Chapter V, Section 9]{Gam} or \cite[Lemma 2.1]{DNear}, where the equivalence is proved in the context of $H^1$ and its subspaces; the case of a general subspace in $L^1$ is similar.) One possible choice is 
$$h(z)=\text{\rm Re}(z^n)=\f12\left(z^n+\ov z^n\right),\qquad z\in\T.$$
Indeed, the assumption that $\text{\rm spec}\,f\subset\La$ implies, in conjunction with \eqref{eqn:perset}, that 
$$\text{\rm spec}\,(z^nf)\subset\La\quad\text{\rm and}\quad
\text{\rm spec}\,(\ov z^nf)\subset\La.$$
Hence $\text{\rm spec}\,(fh)\subset\La$, so that $fh\in L^1_\La$. 

\par Now suppose that $\Z\setminus\La$ is a finite set, say, of cardinality $N$. Thus, 
\begin{equation}\label{eqn:zminusla}
\Z\setminus\La=\{k_1,\dots,k_N\},
\end{equation}
where the $k_j$'s are pairwise distinct integers. Once again, given an arbitrary unit-norm function $f$ in $L^1_\La$, we prove that $f$ is a nonextreme point of $\text{\rm ball}(L^1_\La)$ by constructing a real-valued function $h\in L^\infty$ that satisfies $fh\in L^1_\La$ and is nonconstant on the support of $f$. In fact, we claim that for a suitable nonzero vector 
\begin{equation}\label{eqn:vectal}
\al=(\al_1,\dots,\al_{2N+1})\in\R^{2N+1},
\end{equation}
the function 
$$h_\al(z):=\text{\rm Re}\left(\sum_{j=1}^{2N+1}\al_jz^j\right),\qquad z\in\T,$$
does the job. To check this, we associate with each vector \eqref{eqn:vectal} the numbers
\begin{equation}\label{eqn:galal}
\ga_\nu(\al):=\widehat{\left(fh_\al\right)}(k_\nu),\qquad\nu=1,\dots,N,
\end{equation}
and consider the linear map $S:\R^{2N+1}\to\R^{2N}$ defined by
$$S\al=
\left(\text{\rm Re}\,\ga_1(\al),\,\text{\rm Im}\,\ga_1(\al),\dots,
\text{\rm Re}\,\ga_N(\al),\,\text{\rm Im}\,\ga_N(\al)\right).$$
The rank of $S$ is of course bounded by $2N$, and we deduce from the rank-nullity theorem (see, e.g., \cite[p.\,63]{A}) that the kernel of $S$ has dimension at least $1$; in particular, the kernel is nontrivial. Now, if $\al\in\R^{2N+1}$ is a nonzero vector with $S\al=0$, then the numbers \eqref{eqn:galal} are all null, whence $fh_\al\in L^1_\La$. Also, the function $h_\al$ (which is obviously real-valued and bounded) is then nonconstant on any set $\mathcal E\subset\T$ with $m(\mathcal E)>0$. Our claim is thereby verified. 

\par Finally, suppose that $\La\in\mathcal R$. Consider the space $C:=C(\T)$ of all continuous functions on $\T$, and put 
$$C^\La:=\{f\in C:\,\text{\rm spec}\,f\subset\widetilde\La\},$$
where 
$$\widetilde\La:=\{-k:\,k\in\Z\setminus\La\}.$$
As usual, we identify the dual of $C$ with $M:=M(\T)$, the functional induced by a measure $\mu\in M$ being $g\mapsto\int_\T g\,d\mu$. The dual of the quotient space $C/C^\La$ is then $(C^\La)^\perp$, the annihilator of $C^\La$ in $M$. On the other hand, 
$$(C^\La)^\perp=\{\mu\in M:\,\text{\rm spec}\,\mu\subset\La\}.$$
This last set of measures embeds in $L^1$ (the $\mu$'s involved are absolutely continuous with respect to $m$ because $\La\in\mathcal R$), so it coincides with $L^1_\La$. Consequently, we have 
$$(C/C^\La)^*=(C^\La)^\perp=L^1_\La.$$
The existence of extreme points in $\text{\rm ball}(L^1_\La)$ is now guaranteed by the Krein--Milman theorem; see, e.g., \cite[Chapter 9]{H}. \end{proof}

\par Our next question is motivated by the conjecture---or perhaps a vague feeling---that if $\La\subset\Z_+$ and if $\La$ contains \lq\lq most" of $\Z_+$, then the extreme points of $\text{\rm ball}(H^1(\La))$ are \lq\lq not too far" from being outer functions. Indeed, when $\La$ is {\it all} of $\Z_+$, our space is just $H^1$ and its extreme points are precisely the outer functions of norm 1; see \cite{dLR}. Furthermore, it was shown in \cite{DNear} (see also \cite{DCR}) that if $\Z_+\setminus\La$ is a finite set, say with $\#(\Z_+\setminus\La)=N$, and if $f$ is an extreme point of $\text{\rm ball}(H^1(\La))$, then the inner factor of $f$ is necessarily a finite Blaschke product with at most $N$ zeros. In light of these facts, it seems tempting to conjecture that when $\Z_+\setminus\La$ is appropriately \lq\lq thin" (or \lq\lq sparse") in $\Z_+$, the inner factors corresponding to the extreme points of $\text{\rm ball}(H^1(\La))$ are still fairly \lq\lq tame," in some sense or other. It would be nice to have a rigorous result to that effect. 

\medskip\noindent\textbf{Question 3.} Suppose that $F$ is a suitably sparse (infinite) subset of $\Z_+$, and let $\La=\Z_+\setminus F$. What can we say about the inner factors of functions that arise as extreme points of $\text{\rm ball}(H^1(\La))$? To be more specific, what happens when $F$ is $\{2^k:\,k\in\Z_+\}$ or $\{2^{2^k}:\,k\in\Z_+\}$?

\medskip On the other hand, the case of $H^1(\La)$ where $\La$ (rather than $\Z_+\setminus\La$) is a sparse---say, Hadamard lacunary---subset of $\Z_+$ is also worth studying; that would provide a natural extension to what was done in \cite{DLac}. 

\par Turning to the $L^\infty$ part of Question 1, we now make a few observations pertaining to that setting. First we show that if $\La$ is obtained from $\Z$ by removing a finite number of elements, then the extreme points in $L^\infty_\La$ are precisely the unimodular functions, just as it happens for $L^\infty(=L^\infty_\Z)$. 

\begin{prop} Suppose that $\La\subset\Z$ and $\#(\Z\setminus\La)<\infty$. In order that a function $f\in L^\infty_\La$ with $\|f\|_\infty=1$ be an extreme point of $\text{\rm ball}(L^\infty_\La)$, it is necessary and sufficient that $|f|=1$ a.e. on $\T$. 
\end{prop}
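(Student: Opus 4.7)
The plan is to treat the two implications separately. For \emph{sufficiency}, if $|f|=1$ a.e.\ on $\T$, then $f$ is extreme already in the larger ball $\text{\rm ball}(L^\infty)$ by the classical $L^\infty$ characterization; since $\text{\rm ball}(L^\infty_\La)\subset\text{\rm ball}(L^\infty)$, any midpoint decomposition of $f$ inside the smaller ball is also one inside the larger, so $f$ must be extreme in $\text{\rm ball}(L^\infty_\La)$ as well.

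For \emph{necessity} I argue by contrapositive: assuming $|f|<1$ on a set of positive measure, I want to construct a nonzero $g\in L^\infty_\La$ with $f\pm g\in\text{\rm ball}(L^\infty_\La)$. Choose $\eps>0$ so that the sub-level set $E_\eps:=\{\ze\in\T:|f(\ze)|\le 1-\eps\}$ satisfies $m(E_\eps)>0$, and enumerate $\Z\setminus\La=\{k_1,\dots,k_N\}$. Let $\mathcal X$ be the space of $L^\infty$ functions that vanish off $E_\eps$; this space is infinite-dimensional, since $E_\eps$ splits into arbitrarily many disjoint pieces of positive measure whose characteristic functions are linearly independent. The linear map $g\mapsto(\widehat g(k_1),\dots,\widehat g(k_N))$ sends $\mathcal X$ into $\C^N$, so its kernel in $\mathcal X$ is still infinite-dimensional; in particular I can pick a nonzero $g_0\in\mathcal X$ with $\widehat{g_0}(k_\nu)=0$ for each $\nu$. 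Rescaling so that $\|g_0\|_\infty\le\eps$ yields the desired $g$, which automatically belongs to $L^\infty_\La$.

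Once such a $g$ is in hand, on $E_\eps$ I have $|f\pm g|\le(1-\eps)+\eps=1$, while off $E_\eps$ I have $|f\pm g|=|f|\le 1$; hence $f\pm g\in\text{\rm ball}(L^\infty_\La)$. Since $g\ne 0$, the identity $f=\f12\bigl((f+g)+(f-g)\bigr)$ exhibits $f$ as the midpoint of two distinct points of $\text{\rm ball}(L^\infty_\La)$, so $f$ fails to be extreme. The only potentially delicate step is ensuring that the $g$ surviving the finitely many linear constraints is not identically zero; this is exactly what the infinite-dimensionality of $\mathcal X$ delivers, in the spirit of the rank-nullity argument used above for the cofinite case of $L^1_\La$.
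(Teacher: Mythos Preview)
Your proof is correct. Both the sufficiency and the necessity arguments go through as written; the one point you flag as delicate (that some nonzero $g$ survives the $N$ linear constraints) is indeed handled by the infinite-dimensionality of $\mathcal X$, since a linear map from an infinite-dimensional space into $\C^N$ has nontrivial kernel.

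The paper's own argument for necessity follows the same overall strategy---impose the $N$ Fourier constraints $\widehat{(\cdot)}(k_\nu)=0$ and invoke a dimension count---but realizes the perturbation differently. Rather than working with $L^\infty$ functions supported on a sublevel set $E_\eps$, the paper sets $g:=1-|f|$ and searches among products $gp$, where $p$ ranges over polynomials of degree at most $N$; the rank--nullity theorem applied to the map $\C^{N+1}\to\C^N$, $\be\mapsto\bigl(\widehat{(gp_\be)}(k_\nu)\bigr)_\nu$, yields a nonzero polynomial $p$ (normalized to $\|p\|_\infty=1$) with $gp\in L^\infty_\La$, and then $|f\pm gp|\le|f|+g=1$ everywhere. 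The paper's version has the minor advantage that the perturbation $gp$ is explicit and the dimension count is finite-versus-finite, while your version is arguably more flexible in that it avoids the specific multiplier $1-|f|$ and works directly with the ambient $L^\infty$ structure on $E_\eps$. Either way, the heart of the matter is the same: finitely many missing frequencies impose only finitely many linear conditions, which cannot kill an infinite-dimensional supply of candidate perturbations.
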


\begin{proof} The sufficiency is obvious, since $L^\infty_\La\subset L^\infty$ and every unimodular function is an extreme point of $\text{\rm ball}(L^\infty)$. 
\par To prove the necessity, let \eqref{eqn:zminusla} be an enumeration of $\Z\setminus\La$. Now suppose $f$ is a unit-norm function in $L^\infty_\La$ that satisfies $|f|<1$ on a set of positive measure on $\T$. We then define $g:=1-|f|$, so that $g$ is a non-null function in $L^\infty$; clearly, we also have $g\ge0$ a.e. on $\T$. Further, with each vector 
$$\be=(\be_0,\be_1,\dots,\be_N)\in\C^{N+1}$$
we associate the polynomial 
$$p_\be(z):=\sum_{j=0}^N\be_jz^j,\qquad z\in\T,$$
and consider the linear map $T:\C^{N+1}\to\C^N$ that acts by the rule 
$$T\be=\left(\widehat{(gp_\be)}(k_1),\dots,\widehat{(gp_\be)}(k_N)\right).$$
The rank of $T$ being obviously bounded by $N$, we invoke the rank-nullity theorem to conclude that the kernel of $T$ is nontrivial. 
\par Now, if $\be\in\C^{N+1}$ is a nonzero vector with $T\be=0$, then the corresponding polynomial $p=p_\be$ is non-null and satisfies $gp\in L^\infty_\La\setminus\{0\}$. We may assume in addition that $\|p\|_\infty=1$, which yields
$$|f\pm gp|\le|f|+g|p|\le|f|+g=1$$
almost everywhere on $\T$. Consequently, $f+gp$ and $f-gp$ are two distinct points of $\text{\rm ball}(L^\infty_\La)$, and the identity 
$$f=\f12(f+gp)+\f12(f-gp)$$
shows that $f$ fails to be extreme for the ball.
\end{proof}

\par At the same time, it is not hard to produce a set $\La\subset\Z$ with $\sup\La=\infty$ and $\inf\La=-\infty$ for which $\text{\rm ball}(L^\infty_\La)$ has a much richer supply of extreme points. To this end, we first introduce a bit of terminology. Following \cite{HJ}, we say that a set $\La(\subset\Z)$ is a {\it $\mathcal D$-set} if it has the following property: whenever $\mu\in M(\T)$ is a measure with $\text{\rm spec}\,\mu\subset\La$ whose total variation $|\mu|$ assigns zero mass to a set of positive $m$-measure (length) on $\T$, we have $\mu=0$. 

\par As a classical example of a $\mathcal D$-set, we mention $\Z_+$; indeed, an $H^1$ function that vanishes on a set $\mathcal E\subset\T$ with $m(\mathcal E)>0$ must be null. For more sophisticated examples, we refer the reader to \cite[Part One, Chapter 1]{HJ}. In particular, it is shown there that if $E=\{-n^k:\,k\in\N\}$ with an integer $n\ge2$, then $E\cup\Z_+$ is a $\mathcal D$-set. 

\begin{prop} Let $\La$ be a $\mathcal D$-set. Suppose further that $f\in L^\infty_\La$ is a function with $\|f\|_\infty=1$ for which
\begin{equation}\label{eqn:alexpo}
m\left(\{\ze\in\T:\,|f(\ze)|=1\}\right)>0.
\end{equation}
Then $f$ is an extreme point of $\text{\rm ball}(L^\infty_\La)$. 
\end{prop}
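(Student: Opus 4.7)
The plan is a standard \emph{extremality via parallelogram identity} argument, with the $\mathcal{D}$-set hypothesis playing the role that would otherwise require uniqueness of measures from their spectra.

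First, I would assume for contradiction that $f$ is not extreme, so $f=\tfrac12(u+v)$ with $u,v\in\text{ball}(L^\infty_\La)$ and $u\ne v$. Writing $g:=u-f=f-v$, I get a non-null $g\in L^\infty_\La$ (since $L^\infty_\La$ is a linear subspace) such that
$$|f+g|\le1,\qquad|f-g|\le1\quad\text{a.e. on }\T.$$
My goal is to derive a contradiction by showing $g\equiv0$.

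Next, on the set $E:=\{\ze\in\T:|f(\ze)|=1\}$ I would apply the parallelogram identity pointwise:
$$|f+g|^2+|f-g|^2=2|f|^2+2|g|^2.$$
On $E$ the right-hand side equals $2+2|g|^2$, while the left-hand side is at most $2$. This forces $|g|=0$ a.e.\ on $E$.

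Now comes the step where the $\mathcal{D}$-set hypothesis enters. I would view $g$ as producing the absolutely continuous measure $\mu:=g\,dm\in M(\T)$. Its Fourier coefficients coincide with those of $g$, hence $\text{\rm spec}\,\mu=\text{\rm spec}\,g\subset\La$. Its total variation is $|\mu|=|g|\,dm$, and by the previous step $|\mu|(E)=0$. Since $m(E)>0$ by hypothesis \eqref{eqn:alexpo}, the defining property of a $\mathcal{D}$-set forces $\mu=0$, i.e.\ $g=0$ a.e. This contradicts $u\ne v$ and completes the proof.

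The only substantive point is the parallelogram computation on $E$; everything else is bookkeeping. The main conceptual observation is that the $\mathcal{D}$-set property is tailored exactly to convert \lq\lq vanishing on a set of positive measure\rq\rq\ (obtained from $|f|=1$ on a big set) into \lq\lq vanishing everywhere,\rq\rq\ which is why such sets yield a rich supply of extreme points in $L^\infty_\La$.
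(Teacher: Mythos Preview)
Your proof is correct and follows essentially the same route as the paper: the parallelogram identity yields $|g|^2\le1-|f|^2$ a.e., forcing $g$ to vanish on the set where $|f|=1$, and then the $\mathcal D$-set hypothesis (applied to the measure $g\,dm$) upgrades this to $g\equiv0$. The only cosmetic difference is that the paper phrases the argument directly (``the only $g\in L^\infty_\La$ with $\|f\pm g\|_\infty\le1$ is $g\equiv0$'') rather than by contradiction.
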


\begin{proof} We want to check that the only function $g\in L^\infty_\La$ satisfying
\begin{equation}\label{eqn:fpmgleone}
\|f+g\|_\infty\le1\quad\text{\rm and}\quad\|f-g\|_\infty\le1
\end{equation}
is $g\equiv0$. Since 
$$|f|^2+|g|^2=\f12\left(|f+g|^2+|f-g|^2\right),$$
it follows from \eqref{eqn:fpmgleone} that $|g|^2\le1-|f|^2$ a.e. on $\T$. Consequently, $g=0$ a.e. on 
$$\mathcal E_f:=\{\ze\in\T:\,|f(\ze)|=1\},$$
while \eqref{eqn:alexpo} tells us that $m(\mathcal E_f)>0$. The desired conclusion that $g\equiv0$ is now ensured by the hypothesis that $\La$ is a $\mathcal D$-set. (To see why, identify $g$ with the measure $\mu_g\in M(\T)$ given by $d\mu_g=g\,dm$. Note also that 
$$\text{\rm spec}\,\mu_g=\text{\rm spec}\,g\subset\La$$
and use the identity $|\mu_g|(\mathcal E_f)=\int_{\mathcal E_f}|g|\,dm=0$ to deduce that $\mu_g$, and hence $g$, is null.) We are done. 
\end{proof}

\par We mention in passing that, by a theorem of Amar and Lederer (see \cite{AL}), the unit-norm $H^\infty$ functions that obey \eqref{eqn:alexpo} are precisely the {\it exposed points} of $\text{\rm ball}(H^\infty)$. (Recall that, for a Banach space $X$, a point $x$ in $\text{\rm ball}(X)$ is said to be {\it exposed} for the ball if there exists a functional $\phi\in X^*$ of norm $1$ such that the set $\{y\in\text{\rm ball}(X):\phi(y)=1\}$ equals $\{x\}$. It is well known, and easily shown, that every exposed point is extreme.) The following question might be of interest in this connection. 

\medskip\noindent\textbf{Question 4.} Does there exist a set $\La\subset\Z$ such that the extreme points of $\text{\rm ball}(L^\infty_\La)$ are characterized, among the unit-norm functions $f\in L^\infty_\La$, by condition \eqref{eqn:alexpo}?

\medskip From \eqref{eqn:alexpo}, we now turn to the weaker condition \eqref{eqn:logintdiv} which characterizes the extreme points of $\text{\rm ball}(H^\infty)$. This time, we ask whether the criterion remains unchanged for suitably perturbed $H^\infty$-spaces of the form $L^\infty_\La$, provided that $\La$ is \lq\lq not too different" from $\Z_+$. 

\medskip\noindent\textbf{Question 5.} For which sets $F\subset\Z_+$ is it true that \eqref{eqn:logintdiv} characterizes the extreme points $f$ of $\text{\rm ball}(H^\infty(\Z_+\setminus F))$? Also, for which sets $E\subset\Z_-$ does \eqref{eqn:logintdiv} characterize the extreme points $f$ of $\text{\rm ball}(L^\infty_\La)$, where $\La=E\cup\Z_+$?

\medskip The function $f$ to be tested is, of course, always assumed to be a unit-norm element of the space in question. Now, if $\#F<\infty$, then the corresponding extreme point criterion is indeed given by \eqref{eqn:logintdiv} (see \cite[Theorem 2.1]{DAA}), and a similar fact is true if $\#E<\infty$. The same criterion should apply when $F$ (resp., $E$) is appropriately sparse in $\Z_+$ (resp., $\Z_-$), and we would like to see a reasonably sharp sparseness condition that ensures this. 

\par We note, however, that taking $F$ to be the set of odd positive integers, we get $\Z_+\setminus F=2\Z_+$ and the extreme points $f$ of $\text{\rm ball}(H^\infty(2\Z_+))$ are again described by \eqref{eqn:logintdiv} (see \cite{DAA} for a more detailed discussion of this example). Thus, $F$ need not be any thinner than $\Z_+\setminus F$ in this situation. 

\par Going back to our description of the extreme points of $\text{\rm ball}(H^1(\La))$ and $\text{\rm ball}(H^\infty(\La))$, as obtained previously in the cases \eqref{eqn:lasmall} and \eqref{eqn:lalarge}, we now want to extend these results in yet another direction. 

\medskip\noindent\textbf{Question 6.} What happens to the results just mentioned, as well as to their $L^p_\La$ versions, in higher dimensions (say, on $\T^d$ in place of $\T$)? Also, what happens when passing from $\T$ to $\R$ (or $\R^d$)? 

\medskip Of course, the lacunary Hardy spaces $H^p(\La)$ (resp., the $L^p_\La$ spaces) on the torus $\T^d$ should be defined appropriately in terms of a given set of multi-indices $\La\subset\Z^d_+$ (resp., $\La\subset\Z^d$). In particular, the analogue of \eqref{eqn:lalarge} should now read $\#(\Z^d_+\setminus\La)<\infty$. 

\par Moving to the real line, we fix a closed set $\La\subset\R$ and define $L^p_\La=L^p_\La(\R)$ with $p=1,\infty$ as the space of all functions $f\in L^p(\R)$ whose Fourier transform $\widehat f$ vanishes on $\R\setminus\La$ (when $p=\infty$, we interpret $\widehat f$ in the sense of distributions). The lacunary Hardy spaces $H^p(\La)$ arise when $\La\subset[0,\infty)$. Now, as a natural counterpart of \eqref{eqn:lasmall}, we may impose the condition that $\La$ be a compact set of positive length; the corresponding Paley--Wiener type spaces $L^p_\La$ are actually of special interest. In the simplest case where $\La$ is an interval, the extreme (and exposed) points of $\text{\rm ball}(L^1_\La(\R))$ were characterized in \cite{DMRL2000}. A similar study of the \lq\lq second simplest" case, where $\La$ is made up of two disjoint intervals, was recently carried out in \cite{UZ} (also in the $L^1$ setting), and little---if anything---is known beyond that. 

\par Our last question deals with a different type of subspaces in $H^\infty$ (we are back to $\T$ now), where the structure of extreme points seems to be unclear. Given a function $\ph$ in $L^\infty=L^\infty(\T)$, we put
$$K_p(\ph):=\{f\in H^p:\,\ov{z\ph f}\in H^p\},\qquad1\le p\le\infty,$$
so that $K_p(\ph)$ is the kernel in $H^p$ of the Toeplitz operator with symbol $\ph$. 

\medskip\noindent\textbf{Question 7.} Let $\ph\in L^\infty$ and assume that $K_\infty(\ph)\ne\{0\}$. What are the extreme points of $\text{\rm ball}(K_\infty(\ph))$?

\medskip When $\ph=\ov\th$ for an inner function $\th$, $K_\infty(\ph)$ becomes the {\it model subspace} $H^\infty\cap\th\ov z\ov{H^\infty}$, and the problem of determining its extreme points was posed earlier in \cite{DIEOT}. Furthermore, if $\ph(z)=\ov z^{N+1}$ for some $N\in\Z_+$, then $K_\infty(\ph)$ coincides with $H^\infty(\La_N)$, where $\La_N:=\{0,1,\dots,N\}$, and is formed by the polynomials of degree at most $N$. In this last case, the extreme points are known (see \cite{DMRL2003} or \cite{DAA}). On the other hand, the extreme points of $\text{\rm ball}(K_1(\ph))$ admit a neat description for a general $\ph\in L^\infty$; this can be found in \cite{DPAMS}.

\par We remark, in conclusion, that there are related geometric concepts---such as exposed, strongly exposed, and {\it strong extreme} points of the unit ball---which are also worth studying in the context of lacunary $H^p$ or $L^p$ spaces, as well as in $K_p(\ph)$, with $p=1,\infty$. In fact, even for the usual (nonlacunary) $H^1$, the structure of its exposed points is far from being understood; the case of $H^1(\La)$ is touched upon in \cite{DLac, DNear} for the sets $\La$ that obey \eqref{eqn:lasmall} or \eqref{eqn:lalarge}. As regards strong extreme points, we refer to \cite{CT} for the definition and a characterization of these in the classical $H^p$ setting.

\smallskip{\it Acknowledgement.} I thank Aleksei Aleksandrov for calling my attention to periodic sets in connection with Question 2.

\section*{Competing Interests}

The author has no competing interests to declare that are relevant to the content of this article.

\end{document}